\documentclass[11pt,reqno]{amsart}

\usepackage{amsmath,amssymb,amsthm}
\usepackage[margin=1.15in]{geometry}
\usepackage[colorlinks=true,linkcolor=blue,citecolor=blue,urlcolor=blue]{hyperref}

\theoremstyle{plain}
\newtheorem{theorem}{Theorem}[section]
\newtheorem{proposition}[theorem]{Proposition}
\newtheorem{lemma}[theorem]{Lemma}
\newtheorem{corollary}[theorem]{Corollary}

\theoremstyle{definition}
\newtheorem{definition}[theorem]{Definition}
\newtheorem{example}[theorem]{Example}
\newtheorem{remark}[theorem]{Remark}

\DeclareMathOperator{\shdef}{shdef}
\DeclareMathOperator{\dsh}{dist_{sh}}
\newcommand{\dv}{d^{\mathrm{v}}}
\newcommand{\dvT}[1]{d^{\mathrm{v},\mathsf{T}}_{#1}}
\newcommand{\Lup}{L^{\mathrm{up}}_{k-1}}
\newcommand{\F}{\mathcal{F}}
\newcommand{\HH}{\mathcal{H}}
\newcommand{\Uu}{\mathcal{U}}
\newcommand{\M}{\mathcal{M}}
\newcommand{\Sij}{S_{ij}}

\begin{document}

\title[Stability of shifted complexes]{Stability of Shifted Complexes via the
Second-Moment Defect of the Up-Laplacian}

\author{Vinayak Gupta}
\address{Department of Mathematics, Indian Institute of Technology Guwahati,
Guwahati 781039, India}
\email{vinayakgupta1729v@gmail.com}

\subjclass[2020]{Primary 05E45; Secondary 05D05, 15A18}
\keywords{Simplicial complex, shifted complex, up-Laplacian, compression, conjugate degree
partition, stability}

\begin{abstract}

Let $K$ be a finite pure $k$-dimensional simplicial complex, with $k\ge1$, on the vertex set $[n]$
and with facet family $K_k$. Let $\lambda_1(K)\ge\lambda_2(K)\ge\cdots>0$ be the nonzero
eigenvalues of its
$(k-1)$-dimensional up-Laplacian, and, after ordering the vertices so that
$\deg_K(1)\ge\cdots\ge\deg_K(n)$, let $\dvT{r}(K)$ be the number of vertices contained in at least
$r$ facets. A complex is \emph{shifted} if replacing a vertex of a face by a smaller vertex outside
the face always yields another face. We prove that there is a shifted family $\HH$ of
$(k+1)$-element subsets of $[n]$, with the same number of members as $K_k$, such that
\[
\tfrac12\bigl|K_k\,\triangle\,\HH\bigr|
\;\le\;
\tfrac12\left[\sum_{r\ge1}\bigl(\dvT{r}(K)\bigr)^{2}-\sum_{r}\lambda_r(K)^{2}\right].
\]
The left-hand side counts the facets that have to be exchanged to reach $\HH$; thus one half of the
gap between the second power sums of the two sequences bounds the distance of $K_k$ to a shifted
family. The characterization
$\lambda(K)=\dv(K)^{\mathsf T}\iff K$ is isomorphic to a shifted complex was established in
\cite{Gupta} from the identity that this gap equals twice the number of failed elementary shifts.
The present paper converts that identity into a quantitative stability statement and recovers the
equality characterization at zero defect. For $k=1$ this bounds the number of edge exchanges needed
to reach a threshold graph with the same number of edges.
\end{abstract}

\maketitle 

\section{Introduction}

A simplicial complex on the ordered vertex set $[n]=\{1,\dots,n\}$ is \emph{shifted} if replacing a
vertex of a face by a smaller vertex always produces another face whenever the replacement is
possible. For a pure complex, it is enough to check this condition on the facets. In dimension one,
shifted complexes are precisely threshold graphs, up to a relabeling of the vertices.

The spectral significance of shiftedness was established by Duval and Reiner
\cite{DuvalReiner}. They proved that the nonzero spectrum of the relevant simplicial Laplacian of a
shifted complex is equal to the conjugate vertex-degree partition. In the graph case, the
corresponding spectral identity for threshold graphs was proved by Merris \cite{Merris}. Duval and
Reiner also conjectured a majorization statement for arbitrary uniform families, with equality
precisely for shifted families. Its majorization part has since been disproved in every dimension at
least two \cite{Huang,ZSF}.

Let $K$ be a finite pure $k$-dimensional simplicial complex, with $k\ge1$. Write $\lambda(K)$ for
the nonzero
eigenvalues of its $(k-1)$-dimensional up-Laplacian and $\dv(K)^{\mathsf T}$ for the conjugate
vertex-degree partition. The equality question left by the Duval--Reiner conjecture was settled in
\cite{Gupta}:
\begin{equation}\label{eq:equality}
\lambda(K)=\dv(K)^{\mathsf T}
\quad\Longleftrightarrow\quad
K\text{ is isomorphic to a shifted complex}.
\end{equation}
In fact, Section 3 of \cite{Gupta} proved a stronger second-moment statement. Order the vertices by
nonincreasing degree. For $i<j$, let $N_{ij}$ count the facets for which replacing $j$ by $i$ is
possible, and let $R_{ij}$ count those for which the replacement produces another facet. Then
\[
\shdef(K)=\sum_{i<j}(N_{ij}-R_{ij})
\]
is the total number of failed elementary shifts, and
\begin{equation}\label{eq:defect}
\sum_{r\ge1}\bigl(\dvT{r}(K)\bigr)^2-
\sum_r\lambda_r(K)^2=2\shdef(K).
\end{equation}
Thus equality of the second power sums alone characterizes shiftedness. This is an exact
qualitative statement: it determines when the defect is zero, but does not say how many facets must
be changed when the defect is positive.

The purpose of this paper is to supply that quantitative statement. We work first with an
$r$-uniform family $\F\subseteq\binom{[n]}r$ on a fixed ambient set. Its distance to shiftedness is
\[
\dsh(\F)=\min\Bigl\{\tfrac12|\F\,\triangle\,\HH|\ :\
\HH\subseteq\tbinom{[n]}{r}\ \text{shifted},\ |\HH|=|\F|\Bigr\},
\]
the number of members that must be exchanged to obtain a shifted family of the same size.

\begin{theorem}\label{thm:introA}
Let $r\ge2$ and $\F\subseteq\binom{[n]}{r}$. Then there is a shifted family
$\HH\subseteq\binom{[n]}{r}$ with
$|\HH|=|\F|$ and
\[
\tfrac12\bigl|\F\,\triangle\,\HH\bigr|\ \le\ \shdef(\F).
\]
In particular $\dsh(\F)\le\shdef(\F)$.
\end{theorem}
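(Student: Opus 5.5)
The plan is to reach a shifted family by repeated compressions (elementary shifts). We use $\shdef$ as a potential that pays for every member moved. Work with the fixed labelling of $[n]$, so that $\shdef(\F)=\sum_{i<j}(N_{ij}(\F)-R_{ij}(\F))$ is computed in that order. For $i<j$, let $S_{ij}$ be the usual compression. It replaces each $F\in\F$ with $j\in F$, $i\notin F$ and $F'=F-j+i\notin\F$ by $F'$, and fixes all other members. It preserves $|\F|$ and $r$-uniformity. The number $m_{ij}(\F)$ of members it moves is exactly $N_{ij}(\F)-R_{ij}(\F)$, since the moved members are precisely the failed $(i,j)$-shifts. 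Each moved member contributes one deletion and one insertion, so
\[
\tfrac12\bigl|\F\,\triangle\,S_{ij}(\F)\bigr| = m_{ij}(\F)=N_{ij}(\F)-R_{ij}(\F).
\]

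The key step is the monotonicity inequality
\[
\shdef\bigl(S_{ij}(\F)\bigr)\le \shdef(\F)-m_{ij}(\F).
\]
Given it, the proof finishes at once. Apply compressions $S_{ij}$ with $m_{ij}>0$ in any order. The weight $\sum_{F}\sum_{x\in F}x$ strictly decreases at each nontrivial step, so the process terminates. It stops at a family $\HH$ fixed by every $S_{ij}$, that is, a shifted family with $|\HH|=|\F|$. Summing the displayed identity along the sequence $\F=\F_0,\F_1,\dots,\F_t=\HH$ and using the triangle inequality for $\tfrac12|\cdot\triangle\cdot|$ gives
\[
\tfrac12|\F\triangle\HH|\le\sum_s m(\F_s)\le\sum_s\bigl(\shdef(\F_s)-\shdef(\F_{s+1})\bigr)=\shdef(\F)-\shdef(\HH)\le\shdef(\F).
\]

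To prove monotonicity, split $\shdef$ by pairs $(a,b)$, $a<b$. For the pair $(i,j)$ itself, after compression every $F$ with $j\in F$, $i\notin F$ has $F-j+i$ in the family. So that term drops from $m_{ij}$ to $0$, which gives exactly the required decrease. It then remains to show that the total over all other pairs does not increase.

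\emph{Pairs $\{a,b\}$ disjoint from $\{i,j\}$.} Partition $\binom{[n]}{r}$ into orbits of the transposition $\tau=(i\,j)$. The map $\phi_{ab}\colon F\mapsto F-b+a$ commutes with $\tau$ and maps orbits to orbits. Compression replaces $\F$ on each orbit $\{G,\tau G\}$ by a subset of the same size, pushed toward the $i$-side. A failure is a member $F\in\F$ with $\phi_{ab}(F)$ defined but $\phi_{ab}(F)\notin\F$. For each pair of orbits $\{G,\tau G\}\to\{\phi G,\tau\phi G\}$, check case by case on the occupation numbers (0, 1 or 2 in each orbit) that the number of failures does not increase.

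\emph{Pairs sharing one index with $\{i,j\}$.} These are the pairs $(a,i)$, $(a,j)$, $(i,b)$, $(j,b)$. They must be handled jointly, grouping for each third vertex $c$ the pairs involving $c$ and one of $i,j$. One then works on orbits of the symmetric group on $\{i,j,c\}$ restricted to sets with fixed intersection outside $\{i,j,c\}$. The order of $c$ relative to $i<j$ has three cases. In each, a finite check on which sets of the orbit lie in $\F$ should show that the combined failure count for the pairs $\{c,i\},\{c,j\}$ does not increase.

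I expect this shared-index case to be the main obstacle. A failure for $(c,j)$ can become a failure for $(c,i)$ after compression, so only the sum over the two pairs is monotone, not each pair separately. The first thing I would try is to exhibit, within each orbit block, an explicit injection from post-compression failures to pre-compression failures. If the injection fails in some configuration, the fallback is to verify that the saving of $m_{ij}$ from the $(i,j)$ term absorbs any local increase there.
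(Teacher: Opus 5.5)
\paragraph{There is a gap at the central step.} Your architecture is the paper's: repeated nontrivial compressions, termination via the weight, $\tfrac12|\F\triangle S_{ij}\F|=N_{ij}-R_{ij}$, and telescoping of $\shdef$. The one step that carries all the content is $\shdef(S_{ij}\F)\le\shdef(\F)-m_{ij}(\F)$, and it is not proved. Your treatment of the $(i,j)$-term is correct, and the disjoint-pair case is fine: the occupation count on $\tau$-orbits does go through. For pairs sharing an index with $\{i,j\}$, however, you only predict that a finite check ``should'' work.

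The fallback you propose cannot work. The drop of $m_{ij}$ in the $(i,j)$-term is exactly what the target inequality consumes. Spending it to absorb an increase elsewhere leaves at best $\shdef(S_{ij}\F)\le\shdef(\F)$, which does not telescope to the bound. There is also no spare slack in general. For $\F=\{13\}\subseteq\binom{[3]}{2}$ and $(i,j)=(2,3)$, $\shdef$ goes from $1$ to $0=1-m_{23}$ exactly.

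\paragraph{The missing idea, which is the paper's route, is to rewrite failures as degrees minus adjacencies.} For $a<b$ one has
$N_{ab}-R_{ab}=d_b-c_{ab}-A_{ab}$. Here $c_{ab}$ counts members containing both $a$ and $b$, and $A_{ab}$ counts pairs of members with symmetric difference $\{a,b\}$; each such pair gives exactly one successful shift. Summing gives Lemma~\ref{lem:formula}:
$\shdef(\F)=\sum_v(v-1)d_v-\binom{r}{2}|\F|-A(\F)$.

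Under $C_{ij}$ only $d_i$ and $d_j$ change, by $\pm t$. So the weighted degree sum drops by exactly $(j-i)t$, and everything reduces to $A(C_{ij}\F)\ge A(\F)$. The paper proves this by an explicit injection on adjacent pairs (Lemma~\ref{lem:Amonotone}):
\begin{itemize}
\item Unmoved pairs map to themselves.
\item Moved pairs map to their $\tau$-images.
\item A mixed pair $\{F,G\}$ with $F$ moved goes to $\{\tau F,G\}$ if $\tau G=G$.
\item It goes to $\{\tau F,\tau G\}$ if $j\in G$ and $i\notin G$; then $\tau G\in\F$ and $\tau G$ is unmoved.
\item The case $i\in G$, $j\notin G$ is impossible, because adjacency would force $G=\tau F\notin\F$.
\end{itemize}

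This injection sends a pair of type $\{c,i\}$ or $\{c,j\}$ to a pair of one of these two types, and it preserves every other type. Combined with the facts that $d_c$ and $c_{ci}+c_{cj}$ are unchanged, it confirms your per-$c$ claim. In fact the combined count drops by at least $t$ for each $c$ with $i<c<j$, which gives the paper's stronger decrease $(j-i)t$.

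Alternatively, your block-wise plan can be finished. Each block consists of three sets $Y\cup S$ with $S\subseteq\{i,j,c\}$ of fixed size $1$ or $2$. Enumerating the eight occupation patterns does confirm the claim: the change is $0$ for $c<i$ or $c>j$, and $-t_{\mathrm{block}}$ for $i<c<j$. But that enumeration has to be carried out, not predicted.
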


The theorem converts the local count of failed shifts into a global repair bound. Although the
failures may interact, at most $\shdef(\F)$ members need to be replaced to obtain a shifted family.

Applying Theorem \ref{thm:introA} to the facet family $K_k$ and using \eqref{eq:defect} gives the
following result.

\begin{theorem}\label{thm:introB}
Let $K$ be a finite pure $k$-dimensional simplicial complex, with $k\ge1$, whose vertex set $[n]$
is labeled so that
$\deg_K(1)\ge\cdots\ge\deg_K(n)$. Then there is a shifted $(k+1)$-uniform family
$\HH\subseteq\binom{[n]}{k+1}$ with $|\HH|=|K_k|$ and
\[
\tfrac12\bigl|K_k\,\triangle\,\HH\bigr|
\ \le\ \tfrac12\left[\sum_{r\ge1}\bigl(\dvT{r}(K)\bigr)^{2}-\sum_{r}\lambda_r(K)^{2}\right].
\]
\end{theorem}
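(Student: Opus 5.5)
Theorem \ref{thm:introB} follows by combining Theorem \ref{thm:introA} with the identity \eqref{eq:defect} from \cite{Gupta}. Set $r=k+1\ge2$ and $\F=K_k\subseteq\binom{[n]}{k+1}$, with the vertices labeled so that $\deg_K(1)\ge\cdots\ge\deg_K(n)$. This labeling matters, because it is the one under which $\shdef(K)$ and the quantities $N_{ij},R_{ij}$ in \eqref{eq:defect} are defined.

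\textbf{Matching the two defects.} The first step is to check that $\shdef(\F)$, as used in Theorem \ref{thm:introA} for the uniform family $\F$ on the ordered ground set $[n]$, coincides with $\shdef(K)$. Both are $\sum_{i<j}(N_{ij}-R_{ij})$, computed on the facets under the same vertex order. A facet $F$ contributes to $N_{ij}$ when $j\in F$ and $i\notin F$. It contributes to $R_{ij}$ when, in addition, $(F\setminus\{j\})\cup\{i\}\in K_k$. Since $K$ is pure, shiftedness only needs to be checked on facets, so these counts depend only on $K_k$. Hence the two definitions agree verbatim.

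\textbf{Applying the two results.} Theorem \ref{thm:introA} gives a shifted $\HH\subseteq\binom{[n]}{k+1}$ with $|\HH|=|K_k|$ and
\[
\tfrac12|K_k\triangle\HH|\le\shdef(K_k)=\shdef(K).
\]
By \eqref{eq:defect}, $\shdef(K)$ equals one half of $\sum_{r\ge1}(\dvT{r}(K))^2-\sum_r\lambda_r(K)^2$. Substituting this into the inequality gives exactly the bound in Theorem \ref{thm:introB}.

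\textbf{Main obstacle.} The only delicate point is the bookkeeping in the first step. The identity \eqref{eq:defect} is stated for the degree-sorted labeling, so the same labeling must be used when invoking Theorem \ref{thm:introA}. That theorem holds for any fixed ordering of $[n]$, so this choice costs nothing. Ties among equal degrees can be broken arbitrarily, provided that \eqref{eq:defect} holds for every degree-nonincreasing labeling, as its statement in \cite{Gupta} indicates. The conjugate partition $\dvT{r}(K)$ is independent of the labeling in any case.
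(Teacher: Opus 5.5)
Your proposal is correct and is exactly the paper's argument: the paper proves this (as Theorem~\ref{thm:spectral}) by applying the uniform-family stability result (Theorem~\ref{thm:stability}, i.e.\ Theorem~\ref{thm:introA}) to $\F=K_k$ with $r=k+1\ge2$ under the degree-sorted labeling, and then substituting the identity $\sum_{r\ge1}(\dvT{r}(K))^2-\sum_r\lambda_r(K)^2=2\shdef(K)$ of Theorem~\ref{thm:import}. One small correction: $N_{ij}$ and $R_{ij}$ agree for $K$ and $K_k$ simply because both are defined as counts of facets, so purity plays no role in that step.
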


The left-hand side is the number of facets that must be exchanged to reach the shifted family
$\HH$. Hence one half of the second-moment gap bounds the distance to shiftedness. When the gap is
zero, the result reduces to the earlier characterization \eqref{eq:equality}. For $k=1$, it gives a
quantitative threshold-graph statement: the same gap bounds the number of edges that must be
exchanged to obtain a threshold graph with the same number of edges; see Corollary
\ref{cor:graph}.

The target family $\HH$ may omit labels of $[n]$ that occur in $K$; see Example
\ref{ex:vanishing} and Remark \ref{rmk:ambient}. Thus, for positive defect, $\HH$ need not be the
facet family of a pure complex on the same active vertex set as $K$.

\section{Uniform families, elementary shifts and the defect}\label{sec:prelim}

Throughout, $[n]=\{1,2,\dots,n\}$ is a fixed ambient set of vertex labels and $r\ge 2$ is an integer.
A family $\F\subseteq\binom{[n]}{r}$ is called $r$-\emph{uniform}; its members are $r$-element
subsets of $[n]$. We write $m=|\F|$ and, for $v\in[n]$,
\[
d_v(\F)=\bigl|\{F\in\F: v\in F\}\bigr|
\]
for the degree of $v$ in $\F$. In the application of Section \ref{sec:spectral} we take
$r=k+1$ and $\F=K_k$, the family of facets of a pure $k$-dimensional simplicial complex.

Two set-theoretic conventions are in force and should be kept apart. The ambient set $[n]$ is fixed
once and for all, whereas the \emph{active} set $\bigcup_{F\in\F}F$ of labels actually occurring in
$\F$ is not; the operations below preserve the former but may shrink the latter.

\begin{definition}[Elementary shift]\label{def:shift}
For $i<j$ and $F\in\binom{[n]}{r}$ with $j\in F$ and $i\notin F$, put
\[
\Sij(F)=(F\setminus\{j\})\cup\{i\}.
\]
We say the shift of $F$ from $j$ to $i$ is \emph{admissible}; it \emph{succeeds} if
$\Sij(F)\in\F$ and \emph{fails} otherwise. The family $\F$ is \emph{shifted} if no admissible shift
fails, that is, if $\Sij(F)\in\F$ whenever $F\in\F$, $j\in F$, $i<j$ and $i\notin F$.
\end{definition}

It is convenient to view $\Sij$ as the restriction of a permutation. Let $\tau=\tau_{ij}$ be the
transposition of $[n]$ exchanging $i$ and $j$, acting on subsets in the obvious way. If $j\in F$ and
$i\notin F$ then $\Sij(F)=\tau(F)$, and $\tau$ is an involution and a bijection of $\binom{[n]}{r}$
preserving all intersection cardinalities.

\begin{definition}[Admissible, successful and failed shifts]\label{def:NR}
For $i<j$ set
\[
N_{ij}(\F)=\bigl|\{F\in\F:\ j\in F,\ i\notin F\}\bigr|,\qquad
R_{ij}(\F)=\bigl|\{F\in\F:\ j\in F,\ i\notin F,\ \Sij(F)\in\F\}\bigr|,
\]
so that $N_{ij}(\F)-R_{ij}(\F)\ge0$ is the number of failed shifts from $j$ to $i$. The
\emph{shifting defect} of $\F$ is
\[
\shdef(\F)=\sum_{i<j}\bigl(N_{ij}(\F)-R_{ij}(\F)\bigr).
\]
We suppress the argument $\F$ from $N_{ij}$ and $R_{ij}$ when it is clear from the context.
\end{definition}

By construction $\shdef(\F)$ is a nonnegative integer, and
\begin{equation}\label{eq:defzero}
\shdef(\F)=0\iff \F\ \text{is shifted}.
\end{equation}
Both quantities depend on the linear order of $[n]$, as does shiftedness itself.

\begin{definition}[Adjacency count]\label{def:A}
Let $A(\F)$ be the number of unordered pairs $\{F,G\}$ of distinct members of $\F$ with
$|F\cap G|=r-1$, that is, of pairs differing in exactly one element.
\end{definition}

For $r=3$ the pair $\{123,124\}$ is counted by $A$ while $\{123,145\}$ is not. When $\F=K_k$ is the
facet family of a pure $k$-dimensional complex, $A(\F)$ is the number of unordered pairs of facets
sharing a ridge, the quantity denoted $A(K)$ in \cite[Proposition 3.8]{Gupta}.

\begin{definition}[Distance to shiftedness]\label{def:dist}
For $\F\subseteq\binom{[n]}{r}$ put
\[
\dsh(\F)=\min\Bigl\{\tfrac12|\F\,\triangle\,\HH|\ :\
\HH\subseteq\tbinom{[n]}{r}\ \text{shifted},\ |\HH|=|\F|\Bigr\}.
\]
\end{definition}

If $|\F|=|\HH|$ then $|\F\setminus\HH|=|\HH\setminus\F|=\tfrac12|\F\triangle\HH|$, so
$\dsh(\F)$ is exactly the least number of members that must be deleted from $\F$, and replaced by
the same number of new ones, in order to arrive at a shifted family. The minimum is taken over
shifted families inside the fixed ambient set $[n]$; a label of $[n]$ is permitted to occur in no
member of $\HH$. The following example shows why.

\begin{example}\label{ex:vanishing}
Let $\F=\{234,235\}\subseteq\binom{[5]}{3}$. Both members admit the shift $2\mapsto1$, and both
shifted sets $134,135$ lie outside $\F$. Replacing each member by its shift produces
$\{134,135\}$, a family of the same size in which the label $2$ occurs in no member. Thus an
operation that repairs failed shifts while preserving the number of members can nonetheless shrink
the active vertex set.
\end{example}

\section{The defect formula}\label{sec:formula}

The defect admits a closed formula in which the two summands of Definition \ref{def:NR} are replaced
by a weighted degree sum and the adjacency count. This formula is the bookkeeping device on which
everything below rests: it makes the effect of a compression on $\shdef$ transparent, because a
compression alters the degrees in a completely explicit way and can only increase $A$.

\begin{lemma}\label{lem:formula}
For every $\F\subseteq\binom{[n]}{r}$ with $|\F|=m$,
\[
\shdef(\F)=\sum_{v=1}^{n}(v-1)\,d_v(\F)-\binom{r}{2}m-A(\F).
\]
\end{lemma}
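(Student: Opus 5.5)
We will compute the two sums $\sum_{i<j}N_{ij}$ and $\sum_{i<j}R_{ij}$ separately by double counting over members of $\F$ and over adjacent pairs, and then subtract.

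\emph{The sum of the $N_{ij}$.} Fix $F\in\F$ and $j\in F$. The admissible shifts of $F$ that remove $j$ correspond to the labels $i<j$ with $i\notin F$. There are $j-1$ labels below $j$, and of these, $|\{f\in F: f<j\}|$ lie in $F$. Summing over $j\in F$ gives
\[
\sum_{j\in F}(j-1)-\binom r2,
\]
because each unordered pair of elements of $F$ is counted exactly once, at its larger element. Summing next over $F\in\F$ and exchanging the order of summation turns $\sum_F\sum_{j\in F}(j-1)$ into $\sum_v (v-1)d_v(\F)$. Hence
\[
\sum_{i<j}N_{ij}=\sum_{v=1}^n (v-1)\,d_v(\F)-\binom r2 m.
\]

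\emph{The sum of the $R_{ij}$.} We build a bijection between successful shifts and the adjacent pairs counted by $A(\F)$. A successful shift is a triple $(F,i,j)$ with $i<j$, $j\in F$, $i\notin F$ and $G=\Sij(F)\in\F$. Then $G\neq F$ and $|F\cap G|=r-1$, so we map the triple to the unordered pair $\{F,G\}$.

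For the inverse, take an adjacent pair $\{F,G\}$ in $\F$. Write $F\setminus G=\{a\}$ and $G\setminus F=\{b\}$, so that $a\neq b$. If $a>b$, the triple $(F,b,a)$ is the unique successful shift producing the pair, since it has source $F$. If $a<b$, the unique such triple is $(G,a,b)$, with source $G$. The triple is unique because the source must be the member containing the larger of the two exchanged labels, and the pair $(i,j)$ is then determined by the symmetric difference. Consequently
\[
\sum_{i<j}R_{ij}=A(\F).
\]

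Subtracting the second identity from the first yields the formula for $\shdef(\F)$ in Lemma \ref{lem:formula}. The only step needing care is the bijection. One must check that each adjacent pair is produced exactly once, rather than once from each of its two members. This holds because $\Sij$ only ever moves a label downward, which fixes which member is the source. The remaining steps are routine double counting.
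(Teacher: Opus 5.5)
Your proof is correct and takes essentially the paper's approach. The only difference is bookkeeping in $\sum_{i<j}N_{ij}$: you count member by member, while the paper writes $N_{ij}=d_j-c_{ij}$ and sums over pairs, which is the same double count. Your bijection between successful shifts and adjacent pairs, giving $\sum_{i<j}R_{ij}=A(\F)$, is the same as the paper's.
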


\begin{proof}
We compute $\sum_{i<j}N_{ij}$ and $\sum_{i<j}R_{ij}$ separately.

Fix $i<j$ and put $c_{ij}=|\{F\in\F: i,j\in F\}|$. Of the $d_j$ members containing $j$, exactly
$c_{ij}$ contain $i$ as well, so
\[
N_{ij}=d_j-c_{ij}.
\]
Summing over all pairs $i<j$, and noting that for fixed $j$ there are exactly $j-1$ admissible
values of $i$,
\[
\sum_{i<j}N_{ij}=\sum_{j=1}^{n}(j-1)d_j-\sum_{i<j}c_{ij}.
\]
Each member of $\F$ contains exactly $\binom{r}{2}$ unordered pairs of labels, and $\sum_{i<j}c_{ij}$
counts precisely these incidences, so $\sum_{i<j}c_{ij}=\binom{r}{2}m$ and
\begin{equation}\label{eq:sumN}
\sum_{i<j}N_{ij}=\sum_{j=1}^{n}(j-1)d_j-\binom{r}{2}m.
\end{equation}

Next consider $\sum_{i<j}R_{ij}$. Let $\{F,G\}$ be an unordered pair of distinct members of $\F$
with $|F\cap G|=r-1$. Then $F\triangle G=\{i,j\}$ for a unique pair $i<j$, and exactly one of the two
members contains $j$ and not $i$; that member is counted once by $R_{ij}$, and the other member is
not counted by $R_{ij}$ since it does not contain $j$. No other $R_{i'j'}$ sees the pair, because
$i,j$ are determined by $F\triangle G$. Conversely, a member counted by $R_{ij}$ together with its
successful shift forms a pair of distinct members differing in exactly one label. This correspondence
is a bijection, so
\begin{equation}\label{eq:sumR}
\sum_{i<j}R_{ij}=A(\F).
\end{equation}
Subtracting \eqref{eq:sumR} from \eqref{eq:sumN} gives the assertion.
\end{proof}

\section{Compression}\label{sec:compression}

\begin{definition}[Full compression]\label{def:compression}
Fix $i<j$ and let $\F\subseteq\binom{[n]}{r}$. Put
\[
\M=\M_{ij}(\F)=\{F\in\F:\ j\in F,\ i\notin F,\ \tau(F)\notin\F\},\qquad
\Uu=\F\setminus\M,
\]
where $\tau=\tau_{ij}$, and define the \emph{compression}
\[
C_{ij}\F=\Uu\cup\tau(\M).
\]
Thus a member is replaced by its shift exactly when that shift fails, and is left alone otherwise.
\end{definition}

Leaving a member alone when its shift already lies in $\F$ is what makes the operation
size-preserving: if the members with successful shifts were moved as well, two distinct members
would collapse onto the same set.

\begin{lemma}\label{lem:basic}
Let $i<j$ and write $t=|\M|$. Then
\begin{enumerate}
\item[(a)] $t=N_{ij}(\F)-R_{ij}(\F)$;
\item[(b)] $\tau(\M)\cap\F=\emptyset$; in particular $\tau(\M)\cap\Uu=\emptyset$ and
$|C_{ij}\F|=|\F|$;
\item[(c)] writing $d'_v$ for the degrees in $C_{ij}\F$, one has $d'_i=d_i+t$, $d'_j=d_j-t$ and
$d'_v=d_v$ for $v\ne i,j$;
\item[(d)] $\tfrac12\bigl|\F\,\triangle\,C_{ij}\F\bigr|=t$.
\end{enumerate}
\end{lemma}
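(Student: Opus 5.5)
All four parts follow directly from the definitions of $\M$, $\Uu$ and $\tau=\tau_{ij}$. The plan is to prove them in the order (a), (b), (c), (d), since (c) and (d) both rest on (b).

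For (a), a member $F$ with $j\in F$ and $i\notin F$ is counted by $N_{ij}$. It is counted by $R_{ij}$ exactly when $\Sij(F)=\tau(F)\in\F$. Hence $\M$ is precisely the set of admissible members whose shift fails, and $|\M|=N_{ij}-R_{ij}$.

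For (b), $\tau(\M)\cap\F=\emptyset$ is immediate, because each $F\in\M$ satisfies $\tau(F)\notin\F$ by definition. Since $\Uu\subseteq\F$, it follows that $\tau(\M)\cap\Uu=\emptyset$. Also $\tau$ is injective, so $|\tau(\M)|=|\M|$. Therefore
\[
|C_{ij}\F|=|\Uu|+|\M|=|\F|,
\]
the last equality because $\M\subseteq\F$ and $\Uu=\F\setminus\M$.

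For (c), I compute the degrees of $C_{ij}\F$ using the disjoint decomposition
\[
C_{ij}\F=\Uu\sqcup\tau(\M),\qquad \F=\Uu\sqcup\M .
\]
The difference of degrees is then the contribution of $\tau(\M)$ minus that of $\M$. Each $F\in\M$ contains $j$ and not $i$. Its image $\tau(F)$ contains $i$ and not $j$, and agrees with $F$ on every other label. So passing from $\M$ to $\tau(\M)$ adds exactly $t$ to $d_i$, subtracts exactly $t$ from $d_j$, and leaves $d_v$ unchanged for $v\ne i,j$.

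For (d), the two families share $\Uu$. By (b) we have $\tau(\M)\cap\F=\emptyset$, and $\M\cap\tau(\M)=\emptyset$ because members of $\M$ contain $j$ while members of $\tau(\M)$ do not. Hence
\[
\F\setminus C_{ij}\F=\M,\qquad C_{ij}\F\setminus\F=\tau(\M),
\]
so $|\F\triangle C_{ij}\F|=2t$.

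None of these steps is a real obstacle. The only point needing care is the disjointness in (b), which keeps the union in the definition of $C_{ij}\F$ from collapsing; parts (c) and (d) depend on it.
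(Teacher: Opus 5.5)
Your proof is correct and follows the paper's argument essentially step by step: (a) comes straight from the definitions; (b) follows from $\tau(F)\notin\F$ for $F\in\M$ together with injectivity of $\tau$; (c) compares the contributions of $\M$ and $\tau(\M)$ over the common part $\Uu$; and (d) uses the disjointness of $\M$ and $\tau(\M)$. Your explicit identification $\F\setminus C_{ij}\F=\M$ and $C_{ij}\F\setminus\F=\tau(\M)$ is just a slightly tidier statement of the same point the paper makes in (d).
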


\begin{proof}
(a) is Definition \ref{def:NR} read against Definition \ref{def:compression}: the members of $\M$
are exactly those on which the shift $j\mapsto i$ is admissible and fails. (b) holds because
$F\in\M$ means $\tau(F)\notin\F$, and $\Uu\subseteq\F$; since $\tau$ is injective, the union
defining $C_{ij}\F$ is disjoint and has $|\Uu|+|\M|=|\F|$ members. For (c), each member of $\M$
loses the label $j$ and gains the label $i$ and is otherwise unchanged, while members of $\Uu$ are
untouched. For (d), the members removed are those of $\M$ and the members inserted are those of
$\tau(\M)$; these two sets are disjoint by (b), and each member of $\M$ contains $j$ while each
member of $\tau(\M)$ does not, so no removed member is reinserted.
\end{proof}

We call the members of $\tau(\M)$ \emph{new} and those of $\Uu$ \emph{old}; by Lemma
\ref{lem:basic}(b) every member of $C_{ij}\F$ is of exactly one of these two kinds. The next lemma
is the heart of the argument.

\begin{lemma}\label{lem:Amonotone}
For every $i<j$ and every $\F\subseteq\binom{[n]}{r}$,
\[
A\bigl(C_{ij}\F\bigr)\ \ge\ A(\F).
\]
\end{lemma}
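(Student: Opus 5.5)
The plan is to prove the inequality one ridge at a time, using a local count of $A$ that pairs up nicely under $\tau=\tau_{ij}$. For an $(r-1)$-set $S\subseteq[n]$ put $e_S(\F)=|\{F\in\F: S\subseteq F\}|$. Two distinct members with $|F\cap G|=r-1$ determine $S=F\cap G$ uniquely, so
\[
A(\F)=\sum_{|S|=r-1}\binom{e_S(\F)}{2}.
\]
It therefore suffices to show that, on each orbit of $\tau$ acting on $(r-1)$-sets, the sum $\sum\binom{e_S}{2}$ does not decrease under $C_{ij}$. Write $\phi(F)=\tau(F)$ if $F\in\M$ and $\phi(F)=F$ otherwise. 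By Lemma \ref{lem:basic}(b), $\phi$ is a bijection $\F\to C_{ij}\F$.

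\emph{Orbits of size one.} These are the sets $S$ with $\tau(S)=S$, meaning $S$ contains both $i$ and $j$ or neither. If $\{i,j\}\subseteq S$, no member containing $S$ admits the shift, so the members containing $S$ are unchanged. If $S\cap\{i,j\}=\emptyset$, the only members containing $S$ that can move are those of the form $S\cup\{j\}$, and such a member moves to $S\cup\{i\}$, which still contains $S$. Members of other forms neither leave nor enter. Hence $e_S$ is unchanged in both cases.

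\emph{Orbits of size two.} These are the pairs $\{S,\tau S\}$ with $j\in S$, $i\notin S$. Write $S=T\cup\{j\}$ and $\tau S=T\cup\{i\}$. The members containing $S$ or $\tau S$ are:
\begin{itemize}
\item the single possible set $T\cup\{i,j\}$, which contains both and is never moved;
\item the sets $T\cup\{j,x\}$ for $x\in X$;
\item the sets $T\cup\{i,x\}$ for $x\in Y$.
\end{itemize}
Here $X,Y\subseteq[n]\setminus(T\cup\{i,j\})$. The compression sends $T\cup\{j,x\}$ to $T\cup\{i,x\}$ exactly when $x\in X\setminus Y$. Afterwards the index sets become $X\cap Y$ for $S$ and $X\cup Y$ for $\tau S$. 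So with $\epsilon\in\{0,1\}$ recording the presence of $T\cup\{i,j\}$, the pair $(e_S,e_{\tau S})=(|X|+\epsilon,|Y|+\epsilon)$ becomes $(|X\cap Y|+\epsilon,\;|X\cup Y|+\epsilon)$. The sum is preserved, and the new pair majorizes the old one because $|X\cup Y|\ge\max(|X|,|Y|)$. Since $t\mapsto\binom t2$ is convex, $\binom{a}{2}+\binom{b}{2}$ with $a+b$ fixed increases as the pair spreads. Hence the orbit contribution does not decrease.

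Summing over all orbits gives $A(C_{ij}\F)\ge A(\F)$. The main point requiring care is the bookkeeping in the size-two orbit: one must check that every member containing $S$ or $\tau S$ is of one of the three listed forms, and that $\phi$ maps this collection onto the corresponding collection for $C_{ij}\F$. The latter holds because $\tau$ swaps containment of $S$ and $\tau S$. The exact description of which sets move is precisely Definition \ref{def:compression}: $T\cup\{j,x\}$ moves iff $T\cup\{i,x\}\notin\F$, i.e.\ iff $x\notin Y$.
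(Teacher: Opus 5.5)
Your proof is correct, and it follows a genuinely different route from the paper's.

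\textbf{The paper's route.} It constructs an explicit injection $\Phi$ from adjacent pairs of $\F$ into adjacent pairs of $C_{ij}\F$. The cases are split by which members are moved: both unmoved, both moved, or one of each with sub-cases (3a)--(3c), the last shown impossible. Injectivity is then checked by counting how many new members an image pair contains.

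\textbf{Your route.} You localize at ridges through $A(\F)=\sum_{|S|=r-1}\binom{e_S(\F)}{2}$. You observe that $e_S$ is invariant on $\tau$-fixed ridges. On each orbit $\{T\cup\{j\},T\cup\{i\}\}$ you show that the degree pair $(|X|+\epsilon,|Y|+\epsilon)$ becomes $(|X\cap Y|+\epsilon,|X\cup Y|+\epsilon)$, and convexity of $\binom{t}{2}$ finishes the argument.

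\textbf{What each approach buys.} Your version avoids the injectivity bookkeeping and in fact yields more. Since $\binom{c}{2}+\binom{c+p+q}{2}-\binom{c+p}{2}-\binom{c+q}{2}=pq$, summing over $(r-2)$-sets $T\subseteq[n]\setminus\{i,j\}$, with $X_T,Y_T$ your sets $X,Y$, gives the exact increment
\[
A(C_{ij}\F)-A(\F)=\sum_{T}\bigl|X_T\setminus Y_T\bigr|\cdot\bigl|Y_T\setminus X_T\bigr|.
\]
This could be fed into Proposition~\ref{prop:drop} to sharpen the drop in the defect. The paper's injection, on the other hand, works pair by pair with no auxiliary counting identity or convexity. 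It also makes visible which adjacent pairs of the compressed family are inherited. The pairs it misses are exactly the $\{T\cup\{i,x\},T\cup\{i,y\}\}$ with $x\in X_T\setminus Y_T$ and $y\in Y_T\setminus X_T$, which matches your count.

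\textbf{A small wording point.} In the fixed case $\{i,j\}\subseteq S$ you should also note that no new member enters the star of $S$, since new members omit $j$. Your closing remark about $\phi$ covers this, but it belongs in that case.
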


\begin{proof}
Retain the notation $\M,\Uu,\tau$ of Definition \ref{def:compression} and write
$\F'=C_{ij}\F=\Uu\cup\tau(\M)$. We construct an injection $\Phi$ from the set of adjacent pairs of
$\F$ into the set of adjacent pairs of $\F'$, where a pair is \emph{adjacent} when its two members
are distinct and meet in $r-1$ points. Let $\{F,G\}$ be an adjacent pair of $\F$. Since
$\F=\Uu\sqcup\M$, there are three cases.

\smallskip
\noindent\textit{Case 1: $F,G\in\Uu$.} Both members survive the compression, so
$\Phi(\{F,G\})=\{F,G\}$ is an adjacent pair of $\F'$ consisting of two old members.

\smallskip
\noindent\textit{Case 2: $F,G\in\M$.} Both are replaced, and
$|\tau(F)\cap\tau(G)|=|F\cap G|=r-1$ because $\tau$ is a bijection of $[n]$. Moreover
$\tau(F)\ne\tau(G)$. So $\Phi(\{F,G\})=\{\tau(F),\tau(G)\}$ is an adjacent pair of $\F'$ consisting
of two new members.

\smallskip
\noindent\textit{Case 3: $F\in\M$ and $G\in\Uu$.} Write $F=X\cup\{j\}$ with $i,j\notin X$. We
distinguish three sub-cases according to how $G$ meets $\{i,j\}$.

\smallskip
\noindent\textit{(3a): $G$ contains both of $i,j$ or neither of them.} Then $\tau(G)=G$, so
\[
|\tau(F)\cap G|=|\tau(F)\cap\tau(G)|=|F\cap G|=r-1 .
\]
Also $\tau(F)\ne G$, since $\tau(F)\notin\F$ while $G\in\F$. As $G\in\Uu$ survives, we may set
$\Phi(\{F,G\})=\{\tau(F),G\}$, an adjacent pair of $\F'$ with exactly one new member.

\smallskip
\noindent\textit{(3b): $j\in G$ and $i\notin G$.} Since $G\in\Uu$ was not moved, the only possible
reason is $\tau(G)\in\F$. The set $\tau(G)$ contains $i$ and not $j$, hence $\tau(G)\notin\M$ and
therefore $\tau(G)\in\Uu$; in particular $\tau(G)$ is an old member of $\F'$. As in Case 2,
$|\tau(F)\cap\tau(G)|=r-1$ and $\tau(F)\ne\tau(G)$, so we may set
$\Phi(\{F,G\})=\{\tau(F),\tau(G)\}$, again an adjacent pair of $\F'$ with exactly one new member.

\smallskip
\noindent\textit{(3c): $i\in G$ and $j\notin G$.} We claim this cannot occur. Write $G=Y\cup\{i\}$
with $i,j\notin Y$, so that $|X|=|Y|=r-1$. Since $j\notin G$ and $i\notin F$, we get
$F\cap G=X\cap Y$, and $|F\cap G|=r-1$ forces $X=Y$. Hence $G=X\cup\{i\}=\tau(F)$. But $F\in\M$
means $\tau(F)\notin\F$, contradicting $G\in\F$.

\smallskip
This defines $\Phi$ on all adjacent pairs of $\F$, and every image is an adjacent pair of $\F'$. It
remains to prove that $\Phi$ is injective. By Lemma \ref{lem:basic}(b) the old and new members of
$\F'$ are distinguishable, and the images produced in Cases 1, 2 and 3 contain respectively zero,
two and exactly one new member. Hence no two pairs from different cases have the same image, and it
suffices to check injectivity within each case.

In Case 1 the map is the identity. In Case 2 it is induced by the bijection $\tau$, hence injective.
In Case 3 an image has the form $\{\tau(F),Z\}$ with $\tau(F)$ new and $Z\in\Uu$ old; since $\tau$ is
injective, the new entry determines $F$. It determines the old entry too: in sub-case (3a) we have
$Z=G$, which contains both of $i,j$ or neither, whereas in sub-case (3b) we have $Z=\tau(G)$, which
contains $i$ but not $j$. These two possibilities are mutually exclusive, so $Z$ determines which
sub-case occurred, and then $G=Z$ or $G=\tau(Z)$ accordingly. Thus $\Phi$ is injective on Case 3 as
well, and $A(\F')\ge A(\F)$.
\end{proof}

\begin{proposition}\label{prop:drop}
Let $i<j$ and let $t=|\M_{ij}(\F)|$ be the number of members moved by $C_{ij}$. Then
\[
\shdef\bigl(C_{ij}\F\bigr)\ \le\ \shdef(\F)-(j-i)t\ \le\ \shdef(\F)-t .
\]
\end{proposition}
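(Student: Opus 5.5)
The plan is to combine the closed formula of Lemma \ref{lem:formula} with the explicit degree changes of Lemma \ref{lem:basic} and the monotonicity of $A$ from Lemma \ref{lem:Amonotone}. Write $\F'=C_{ij}\F$. By Lemma \ref{lem:basic}(b) the compression preserves the number of members, so $|\F'|=|\F|=m$, and the term $\binom r2 m$ in Lemma \ref{lem:formula} is the same for $\F$ and $\F'$. Subtracting the two instances of the formula therefore gives
\[
\shdef(\F')-\shdef(\F)=\sum_{v=1}^{n}(v-1)\bigl(d'_v-d_v\bigr)-\bigl(A(\F')-A(\F)\bigr).
\]

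The degree sum is computed from Lemma \ref{lem:basic}(c). Only the labels $i$ and $j$ change degree: $d'_i=d_i+t$ and $d'_j=d_j-t$. Hence the weighted sum equals
\[
(i-1)t-(j-1)t=-(j-i)t.
\]
Lemma \ref{lem:Amonotone} gives $A(\F')-A(\F)\ge0$, so the second bracket contributes a nonpositive amount. Together these yield
\[
\shdef(\F')\le\shdef(\F)-(j-i)t .
\]

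For the second inequality we use $j-i\ge1$ and $t\ge0$, which give $(j-i)t\ge t$.

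There is no real obstacle here, because the difficult step, the injection proving $A(C_{ij}\F)\ge A(\F)$, has already been carried out in Lemma \ref{lem:Amonotone}. The only care needed is to confirm that Lemma \ref{lem:formula} is applied on the same fixed ambient set $[n]$ with the same linear order in both instances. This holds, since compression changes only the active labels, never the ambient set, so the weights $v-1$ are unchanged.
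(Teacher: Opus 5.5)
Your proposal is correct and follows the paper's proof essentially verbatim: you apply Lemma~\ref{lem:formula} to $\F$ and $C_{ij}\F$, use Lemma~\ref{lem:basic}(b),(c) for the unchanged size and the weighted degree change $-(j-i)t$, and use Lemma~\ref{lem:Amonotone} for $A(C_{ij}\F)\ge A(\F)$. The second inequality then follows from $j-i\ge1$ and $t\ge0$, exactly as in the paper.
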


\begin{proof}
Write $\F'=C_{ij}\F$ and apply Lemma \ref{lem:formula} to $\F$ and to $\F'$. By Lemma
\ref{lem:basic}(c) the weighted degree sum changes by
\[
\sum_{v=1}^{n}(v-1)d'_v-\sum_{v=1}^{n}(v-1)d_v=(i-1)t-(j-1)t=-(j-i)t,
\]
by Lemma \ref{lem:basic}(b) the term $\binom{r}{2}|\F'|$ is unchanged, and by Lemma
\ref{lem:Amonotone} the term $-A(\F')$ is at most $-A(\F)$. Hence
$\shdef(\F')\le\shdef(\F)-(j-i)t$. The second inequality follows from $j-i\ge1$ and $t\ge0$.
\end{proof}

\begin{definition}[Weight]\label{def:weight}
The \emph{weight} of $\F\subseteq\binom{[n]}{r}$ is
$W(\F)=\sum_{F\in\F}\sum_{v\in F}v$.
\end{definition}

\begin{lemma}\label{lem:termination}
Call a compression \emph{nontrivial} when it moves at least one member. If
$t=|\M_{ij}(\F)|$, then
\[
W(C_{ij}\F)=W(\F)-(j-i)t.
\]
Consequently any sequence of nontrivial compressions is finite, and a family admitting no
nontrivial compression is shifted.
\end{lemma}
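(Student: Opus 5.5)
The weight identity is a direct computation in the spirit of Lemma~\ref{lem:basic}(c), and the two consequences then follow from the fact that $W$ is a bounded nonnegative integer.

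\textbf{The weight identity.} Write $\F'=C_{ij}\F=\Uu\cup\tau(\M)$. By Lemma~\ref{lem:basic}(b) this union is disjoint, so
\[
W(\F')=\sum_{F\in\Uu}\sum_{v\in F}v+\sum_{F\in\M}\sum_{v\in\tau(F)}v .
\]
For each $F\in\M$ we have $\tau(F)=(F\setminus\{j\})\cup\{i\}$. Hence its inner sum equals that of $F$ minus $j-i$. Summing over the $t$ members of $\M$ and adding back the unchanged contribution of $\Uu$ gives $W(\F')=W(\F)-(j-i)t$. Equivalently, $W(\F)=\sum_v v\,d_v(\F)$, and Lemma~\ref{lem:basic}(c) gives the change directly.

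\textbf{Finiteness.} For a nontrivial compression we have $t\ge1$ and $j-i\ge1$, so $W$ drops by at least $1$. Moreover $W$ is a nonnegative integer; indeed $W(\F)\ge m\binom{r+1}{2}\ge 0$. So $W$ cannot decrease forever, and any sequence of nontrivial compressions has length at most $W(\F)$.

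\textbf{The last claim.} Suppose no $C_{ij}$ is nontrivial, so $|\M_{ij}(\F)|=0$ for every $i<j$. By Lemma~\ref{lem:basic}(a), $N_{ij}-R_{ij}=0$ for all $i<j$. Hence $\shdef(\F)=0$, and $\F$ is shifted by \eqref{eq:defzero}.

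There is no real obstacle. The only point needing care is the disjointness of $\Uu$ and $\tau(\M)$, which keeps a moved member from being double counted; Lemma~\ref{lem:basic}(b) already supplies it.
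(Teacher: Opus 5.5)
Your proof is correct and follows the paper's argument: each moved member's weight drops by $j-i$, finiteness follows because $W$ is a nonnegative integer that strictly decreases, and $\M_{ij}(\F)=\emptyset$ for all $i<j$ gives shiftedness. The differences are cosmetic: you route the last step through $\shdef(\F)=0$ and \eqref{eq:defzero} rather than reading it directly off Definition \ref{def:shift}, and you add an explicit bound on the length of the sequence.
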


\begin{proof}
Each moved member exchanges the label $j$ for the label $i$ and is otherwise unchanged, so its
contribution to $W$ drops by $j-i$; unmoved members contribute the same as before. Since $W(\F)$ is
a nonnegative integer and each nontrivial compression strictly decreases it, no infinite sequence
of nontrivial compressions exists. Finally, if no compression is nontrivial
then $\M_{ij}(\F)=\emptyset$ for all $i<j$, that is, every admissible elementary shift succeeds, and
$\F$ is shifted by Definition \ref{def:shift}.
\end{proof}

\section{The stability theorem}\label{sec:stability}

\begin{theorem}\label{thm:stability}
Let $r\ge2$ and let $\F\subseteq\binom{[n]}{r}$ be an $r$-uniform family on the fixed ambient set
$[n]$. Then there exists a shifted family $\HH\subseteq\binom{[n]}{r}$ with $|\HH|=|\F|$ such that
\[
\tfrac12\bigl|\F\,\triangle\,\HH\bigr|\ \le\ W(\F)-W(\HH)\ \le\ \shdef(\F).
\]
In particular
\[
\dsh(\F)\ \le\ \shdef(\F).
\]
Some labels of the ambient set $[n]$ are allowed to occur in no member of $\HH$.
\end{theorem}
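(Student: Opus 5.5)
We iterate compressions and let the weight $W$ act as a potential that controls both the distance travelled and the defect spent. Start with $\F_0=\F$. As long as $\F_s$ admits a nontrivial compression, pick any pair $i_s<j_s$ with $t_s=|\M_{i_sj_s}(\F_s)|\ge1$ and put $\F_{s+1}=C_{i_sj_s}\F_s$. By Lemma \ref{lem:termination} this process stops after finitely many steps, say at $\F_N$. The final family admits no nontrivial compression, so it is shifted; set $\HH=\F_N$. Every family in the sequence lies in $\binom{[n]}{r}$ and has size $|\F|$, by Lemma \ref{lem:basic}(b). We make no claim about which labels remain active, which accounts for the final sentence of the statement.

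For the first inequality we use the triangle inequality for symmetric difference, $|\F\triangle\HH|\le\sum_{s<N}|\F_s\triangle\F_{s+1}|$. Lemma \ref{lem:basic}(d) gives $\tfrac12|\F_s\triangle\F_{s+1}|=t_s$, so
\[
\tfrac12|\F\triangle\HH|\ \le\ \sum_{s<N}t_s\ \le\ \sum_{s<N}(j_s-i_s)t_s .
\]
By Lemma \ref{lem:termination}, the right-hand side telescopes to $W(\F)-W(\HH)$.

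For the second inequality, Proposition \ref{prop:drop} gives $(j_s-i_s)t_s\le\shdef(\F_s)-\shdef(\F_{s+1})$. Summing over $s$ and telescoping yields
\[
W(\F)-W(\HH)\ \le\ \shdef(\F)-\shdef(\HH)\ \le\ \shdef(\F).
\]
Here $\shdef(\HH)=0$ by \eqref{eq:defzero}, and in any case $\shdef\ge0$. The bound $\dsh(\F)\le\shdef(\F)$ then follows directly from Definition \ref{def:dist}.

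There is no real obstacle left at this stage, because the substantive work is already contained in Lemma \ref{lem:Amonotone} and Proposition \ref{prop:drop}. The only point needing care is that both telescoping sums use the same factor $(j_s-i_s)t_s$. Proposition \ref{prop:drop} gives the drop in $\shdef$ with exactly this factor, not merely $t_s$, and that is what allows $W(\F)-W(\HH)$ to sit as an intermediate quantity between the two sides.
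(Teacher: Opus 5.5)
Your proposal is correct and is essentially the paper's proof. You run nontrivial compressions to termination, bound $\tfrac12|\F\triangle\HH|$ by $\sum t_s\le\sum(j_s-i_s)t_s=W(\F)-W(\HH)$ using the triangle inequality and Lemma \ref{lem:basic}(d), and bound that weight drop by $\shdef(\F)$ by telescoping Proposition \ref{prop:drop}, exactly as the paper does.
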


\begin{proof}
Put $\F_0=\F$ and apply nontrivial compressions repeatedly: given $\F_q$, if it is not shifted then
by \eqref{eq:defzero} some admissible shift fails, so some $C_{i_qj_q}$ is nontrivial, and we set
$\F_{q+1}=C_{i_qj_q}\F_q$. By Lemma \ref{lem:termination} the process stops after finitely many
steps, say at $\F_s=\HH$, and $\HH$ is shifted. By Lemma \ref{lem:basic}(b) every step preserves the
number of members, so $|\HH|=|\F|$.

Let $t_q\ge1$ be the number of members moved at step $q$ and put $w_q=(j_q-i_q)t_q$. Proposition
\ref{prop:drop} gives
\[
w_q\ \le\ \shdef(\F_q)-\shdef(\F_{q+1})\qquad(0\le q<s),
\]
and summing this telescoping bound, using $\shdef(\HH)=0$ from \eqref{eq:defzero}, yields
\begin{equation}\label{eq:telescope}
\sum_{q=0}^{s-1}w_q\ \le\ \shdef(\F_0)-\shdef(\F_s)=\shdef(\F).
\end{equation}
By Lemma \ref{lem:termination} the weight drops by exactly $w_q$ at step $q$, so
$W(\F)-W(\HH)=\sum_{q}w_q$, which with \eqref{eq:telescope} gives the second inequality of the
theorem.

For the first, Lemma \ref{lem:basic}(d) gives $\tfrac12|\F_q\triangle\F_{q+1}|=t_q$, and the
triangle inequality for the symmetric difference, applied along the chain
$\F_0,\F_1,\dots,\F_s$, gives
\[
\tfrac12\bigl|\F\,\triangle\,\HH\bigr|\ \le\ \sum_{q=0}^{s-1}\tfrac12\bigl|\F_q\triangle\F_{q+1}\bigr|
=\sum_{q=0}^{s-1}t_q\ \le\ \sum_{q=0}^{s-1}w_q = W(\F)-W(\HH),
\]
the last inequality because $j_q-i_q\ge1$. Finally $\dsh(\F)\le\tfrac12|\F\triangle\HH|$ by
Definition \ref{def:dist}.
\end{proof}

\begin{remark}\label{rmk:sharp}
The inequality $\dsh(\F)\le\shdef(\F)$ is sharp. Here and below we abbreviate a set by the concatenation of its labels. For $r=2$ and
$\F=\{13\}\subseteq\binom{[3]}{2}$ the
only admissible shift is $3\mapsto2$ and it fails, so $\shdef(\F)=1$; and $\F$ is not shifted, so
$\dsh(\F)=1$. Similarly $\F=\{124\}\subseteq\binom{[5]}{3}$ has $\shdef(\F)=\dsh(\F)=1$.
\end{remark}

\begin{remark}\label{rmk:gap}
The two quantities are nevertheless of different orders in general. If $\F=\{F\}$ is a single set
$F=\{v_1<\cdots<v_r\}$, then every admissible shift fails, so
\[
\shdef(\F)=\sum_{j\in F}\bigl|\{i<j:\ i\notin F\}\bigr|=\sum_{t=1}^{r}(v_t-t),
\]
which is unbounded, whereas $\dsh(\F)\le1$ because $\{\{1,\dots,r\}\}$ is shifted. Thus Theorem
\ref{thm:stability} is far from tight for families with a few badly placed members; it is designed
to be tight in the opposite regime, where the defect is small.
\end{remark}

\section{The spectral form}\label{sec:spectral}

Let $K$ be a finite pure $k$-dimensional simplicial complex, with $k\ge1$, on $[n]$. Write $K_k$
for its facet family, $f_k(K)=|K_k|$, $\dvT{r}(K)$ for the number of vertices contained in at least $r$ facets,
and $\lambda_r(K)$ for the nonzero eigenvalues of $\Lup(K)$. After labeling the vertices by
nonincreasing degree, put $\shdef(K)=\shdef(K_k)$.

\begin{theorem}[{\cite[Theorem 3.9]{Gupta}}]\label{thm:import}
Let $K$ be a finite pure $k$-dimensional simplicial complex, with $k\ge1$, whose vertices are
ordered by nonincreasing degree. Then
\[
\sum_{r\ge1}\bigl(\dvT{r}(K)\bigr)^{2}-\sum_{r}\lambda_r(K)^{2}=2\shdef(K).
\]
\end{theorem}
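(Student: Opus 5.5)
Although this identity is imported from \cite[Theorem 3.9]{Gupta}, I would reprove it directly by computing both power sums in closed form and comparing them with Lemma \ref{lem:formula} for $r=k+1$ and $\F=K_k$, $m=f_k(K)$.

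\emph{Spectral side.} Let $\partial=\partial_k$ be the boundary matrix from oriented $k$-faces to oriented $(k-1)$-faces, so that $\Lup(K)=\partial\partial^{\mathsf T}$. Then $\sum_r\lambda_r(K)^2=\operatorname{tr}\bigl((\partial\partial^{\mathsf T})^2\bigr)$. By cyclicity of the trace this equals $\operatorname{tr}\bigl((\partial^{\mathsf T}\partial)^2\bigr)$, and $\partial^{\mathsf T}\partial$ is indexed by facets. Its diagonal entries equal $k+1$, since each facet has $k+1$ codimension-one faces. An off-diagonal entry $(F,G)$ equals $\pm1$ when $|F\cap G|=k$, because the facets then share exactly one ridge, and $0$ otherwise. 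Hence
\[
\operatorname{tr}\bigl((\partial^{\mathsf T}\partial)^2\bigr)=\sum_{F,G}\bigl((\partial^{\mathsf T}\partial)_{FG}\bigr)^2=(k+1)^2m+2A(K_k).
\]
The signs disappear upon squaring, so the choice of orientation is irrelevant.

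\emph{Degree side.} With vertices labeled so that $d_1\ge d_2\ge\cdots\ge d_n$, the numbers $\dvT{r}(K)$ form the conjugate partition of $(d_1,\dots,d_n)$. I would use the standard identity $\sum_r(\mu^{\mathsf T}_r)^2=\sum_v(2v-1)\mu_v$ for a partition $\mu$. It follows by counting ordered pairs of cells of the Young diagram that lie in a common column: in row $v$ a cell in column $c$ pairs with its own cell and, twice, with each of the $v-1$ cells above it in that column. This gives
\[
\sum_{r\ge1}\bigl(\dvT{r}(K)\bigr)^2=\sum_{v}(2v-1)d_v=2\sum_v(v-1)d_v+(k+1)m,
\]
where we used $\sum_vd_v=(k+1)m$.

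\emph{Comparison.} Subtracting the two expressions gives
\[
2\sum_v(v-1)d_v+(k+1)m-(k+1)^2m-2A=2\Bigl[\sum_v(v-1)d_v-\tbinom{k+1}{2}m-A\Bigr].
\]
By Lemma \ref{lem:formula} the bracket is exactly $\shdef(K_k)$, which proves the theorem.

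The only delicate step is the spectral side. One must justify that the sum of squared nonzero eigenvalues equals the trace of the square, which holds because $\Lup$ is symmetric positive semidefinite and zero eigenvalues contribute nothing. One must also check that two distinct facets share at most one ridge, so that the off-diagonal entries are exactly $0$ or $\pm1$. The degree ordering is needed only on the degree side, where it makes $\dvT{r}(K)$ the conjugate partition.
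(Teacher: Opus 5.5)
Your proof is correct and takes essentially the same route as the cited result. The paper only imports this identity from \cite{Gupta}, but the ingredients it points to are exactly yours: the closed formula of Lemma~\ref{lem:formula}, the ridge-adjacency count $A(K)$ of \cite[Proposition 3.8]{Gupta} entering through $\sum_r\lambda_r(K)^2=\operatorname{tr}\bigl(\Lup(K)^2\bigr)=(k+1)^2f_k(K)+2A(K)$, and the conjugate-partition identity $\sum_{r\ge1}\bigl(\dvT{r}(K)\bigr)^2=\sum_v(2v-1)d_v$.
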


\begin{theorem}\label{thm:spectral}
Let $K$ be a finite pure $k$-dimensional simplicial complex, with $k\ge1$, on $[n]$. If its vertices
are labeled by nonincreasing degree, then there is a shifted family
$\HH\subseteq\binom{[n]}{k+1}$ with $|\HH|=f_k(K)$ such that
\[
\tfrac12\bigl|K_k\,\triangle\,\HH\bigr|\ \le\ \shdef(K)
=\ \tfrac12\left[\sum_{r\ge1}\bigl(\dvT{r}(K)\bigr)^{2}-\sum_{r}\lambda_r(K)^{2}\right].
\]
\end{theorem}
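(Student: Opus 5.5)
The plan is to obtain Theorem \ref{thm:spectral} by combining Theorem \ref{thm:stability} with the imported identity of Theorem \ref{thm:import}. No new combinatorics should be needed.

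First, we place ourselves in the setting of Section \ref{sec:prelim}. Take $r=k+1\ge2$ and $\F=K_k\subseteq\binom{[n]}{k+1}$, where $[n]$ carries the fixed labeling by nonincreasing degree. With this labeling, $\shdef(K)$ is by definition $\shdef(K_k)$, computed with respect to exactly this linear order. The hypothesis $r\ge2$ of Theorem \ref{thm:stability} holds because $k\ge1$. Theorem \ref{thm:stability} then supplies a shifted family $\HH\subseteq\binom{[n]}{k+1}$ with $|\HH|=|K_k|=f_k(K)$ and $\tfrac12|K_k\triangle\HH|\le\shdef(K_k)=\shdef(K)$. This is the inequality in the statement.

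Second, the equality in the statement is exactly Theorem \ref{thm:import} divided by $2$. It applies because the vertices are ordered by nonincreasing degree, which is the same ordering used to define $\shdef(K)$. Chaining the two displays gives the claim.

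The only point requiring care is that the linear order used for shiftedness in Theorem \ref{thm:stability} must coincide with the order in Theorem \ref{thm:import}. When degrees are tied, the labeling is not unique. Theorem \ref{thm:import} holds for any nonincreasing-degree labeling, so we fix one such labeling at the outset and use it throughout. With that choice the combination is immediate, and I expect no real obstacle.
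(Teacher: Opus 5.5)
Your proposal is correct and matches the paper's proof exactly: apply Theorem \ref{thm:stability} to $K_k$ with $r=k+1\ge2$ under the fixed nonincreasing-degree labeling, then invoke Theorem \ref{thm:import} for the equality. Your remark about fixing one such labeling, so that the orders in both results agree, is a sensible piece of bookkeeping that the paper leaves implicit.
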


\begin{proof}
Apply Theorem \ref{thm:stability} to $K_k$ and then use Theorem \ref{thm:import}.
\end{proof}

\begin{corollary}[Threshold-graph stability]\label{cor:graph}
Let $G$ be a finite simple graph on the ambient vertex set $[n]$, labeled so that
$\deg_G(1)\ge\deg_G(2)\ge\cdots\ge\deg_G(n)$. Write
\[
d_r^{\mathsf T}(G)=\bigl|\{v\in[n]:\deg_G(v)\ge r\}\bigr|
\]
for its conjugate degree sequence, and let
$\mu_1(G)\ge\cdots\ge\mu_n(G)=0$ be its Laplacian eigenvalues. Then there is a threshold graph $H$
on $[n]$ with $|E(H)|=|E(G)|$ such that
\[
\tfrac12\bigl|E(G)\,\triangle\,E(H)\bigr|
\ \le\ \tfrac12\left[
\sum_{r\ge1}\bigl(d_r^{\mathsf T}(G)\bigr)^2
-\sum_{i=1}^{n}\mu_i(G)^2
\right].
\]
In particular, if the second-moment gap vanishes, then $G$ is a threshold graph.
\end{corollary}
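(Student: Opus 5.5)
The corollary is the case $k=1$ of Theorem~\ref{thm:spectral}, so the plan is to specialize that result, translate each of its ingredients into graph language, and then read off the zero-gap statement.

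\textbf{Specializing to $k=1$.} Regard $G$ as a one-dimensional complex $K$ on $[n]$ whose facet family is $K_1=E(G)$. The vertex labeling by nonincreasing degree is exactly the hypothesis of Theorem~\ref{thm:spectral}. Isolated vertices need some care, since they may not be faces of a pure complex. I would either take $K$ to be the complex generated by the edges, which carries the same degrees and Laplacian nonzero spectrum, or note that Theorem~\ref{thm:stability} concerns only the family $E(G)\subseteq\binom{[n]}{2}$. The degenerate case $E(G)=\emptyset$ is trivial: the gap is $0$ and $H$ is the empty graph.

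\textbf{Translating the terms.}
\begin{enumerate}
\item \emph{Degrees.} $\deg_K(v)$ counts the facets containing $v$, which is $\deg_G(v)$. Hence $\dvT{r}(K)=d_r^{\mathsf T}(G)$.
\item \emph{Laplacian.} $L^{\mathrm{up}}_0=\partial_1\partial_1^{\mathsf T}$ is the ordinary graph Laplacian of $G$. Its nonzero eigenvalues are therefore the nonzero $\mu_i(G)$, and because zero eigenvalues contribute nothing, $\sum_r\lambda_r(K)^2=\sum_{i=1}^n\mu_i(G)^2$.
\item \emph{Shiftedness.} A shifted family $\HH\subseteq\binom{[n]}{2}$ is the edge set of a graph $H$ on $[n]$ whose neighborhoods are nested compatibly with the order. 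I would recall the standard fact quoted in the introduction that shifted graphs are threshold graphs; isolated vertices, including labels that occur in no member of $\HH$ (Remark~\ref{rmk:ambient}), are harmless because adding isolated vertices preserves the threshold property.
\end{enumerate}
Substituting these identifications into Theorem~\ref{thm:spectral} gives the displayed inequality, with $|E(H)|=|E(G)|$.

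\textbf{The zero-gap case.} If the gap vanishes, then $\shdef(E(G))=0$ by Theorem~\ref{thm:import}. By \eqref{eq:defzero}, $E(G)$ is shifted in the given labeling, so $G$ itself is threshold. Equivalently, the bound forces $\tfrac12|E(G)\triangle E(H)|\le 0$, so $G=H$.

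\textbf{Main obstacle.} No step is deep. The only point needing care is matching conventions: the identification of $L^{\mathrm{up}}_0$ with the graph Laplacian, and the claim that shifted graphs with possibly empty labels are threshold. I would cite the standard shifted/threshold equivalence rather than reprove it.
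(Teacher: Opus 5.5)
Your proposal is correct and follows essentially the same route as the paper: specialize Theorem~\ref{thm:spectral} to $k=1$, identify the conjugate degrees and the up-Laplacian spectrum with the graph data, use that shifted $2$-uniform families are threshold graphs, and treat the edgeless case separately. Your handling of isolated vertices, by passing to the complex generated by the edges, is equivalent to the paper's ``delete and restore'' step. This works because the degree ordering puts the isolated vertices at the largest labels, and they contribute nothing to either power sum.
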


\begin{proof}
Delete the isolated vertices, apply Theorem \ref{thm:spectral} with $k=1$, and then restore the
isolated vertices. Shifted $2$-uniform families are precisely threshold graphs, and isolated
vertices do not change either second power sum. The edgeless case is immediate.
\end{proof}

\begin{remark}\label{rmk:ambient}
The family $\HH$ lies in the fixed ambient set $[n]$, but it need not use every vertex occurring in
$K$; see Example \ref{ex:vanishing}. Thus $\HH$ need not be the facet family of a pure complex on
the same active vertex set as $K$.
\end{remark}

If the second-moment gap is zero, Theorem \ref{thm:spectral} gives $K_k=\HH$. Since $K$ is pure, a
shifted facet family implies that $K$ itself is shifted. This recovers
\cite[Theorem 3.12]{Gupta}.


\begin{thebibliography}{9}

\bibitem{DuvalReiner}
A.~M. Duval and V.~Reiner,
\emph{Shifted simplicial complexes are Laplacian integral},
Trans. Amer. Math. Soc. \textbf{354} (2002), 4313--4344.
\href{https://doi.org/10.1090/S0002-9947-02-03082-9}{DOI:10.1090/S0002-9947-02-03082-9}

\bibitem{Gupta}
V.~Gupta,
\emph{Spectral bounds and shifted complexes: eigenvalues of the up-Laplacian via face degrees},
arXiv:2608.01694 (2026). \href{https://arxiv.org/abs/2608.01694}{arXiv:2608.01694}

\bibitem{Huang}
J.~Huang,
\emph{The Duval--Reiner conjecture: counterexamples and the second partial-sum inequality},
arXiv:2607.20051 (2026). \href{https://arxiv.org/abs/2607.20051}{arXiv:2607.20051}

\bibitem{Merris}
R.~Merris,
\emph{Degree maximal graphs are Laplacian integral},
Linear Algebra Appl. \textbf{199} (1994), 381--389.
\href{https://doi.org/10.1016/0024-3795(94)90361-1}{DOI:10.1016/0024-3795(94)90361-1}

\bibitem{ZSF}
H.-Z. Zhang, Y.-M. Song and Y.-Z. Fan,
\emph{Degree majorization and Laplacian eigenvalue sums for simplicial complexes},
arXiv:2607.20910 (2026). \href{https://arxiv.org/abs/2607.20910}{arXiv:2607.20910}

\end{thebibliography}
\end{document}